\title{The abelianization of the level $L$ mapping class group}
\author{Andrew Putman}
\date{March 12, 2008}
\theoremstyle{plain}
\newtheorem{theorem}{Theorem}[section]
\newtheorem{lemma}[theorem]{Lemma}
\newcommand\BeginClaimProof{\begin{proof}[{Proof of Claim \theclaim}]}
\newcommand\EndClaimProof{\end{proof}}
\theoremstyle{definition}
\newtheorem{definition}[theorem]{Definition}
\theoremstyle{remark}
\newtheorem*{remark}{Remark}
\newtheorem*{acknowledgements}{Acknowledgments}
\DeclareMathOperator{\Diff}{Diff}
\DeclareMathOperator{\Ker}{ker}
\DeclareMathOperator{\Mod}{Mod}
\newcommand\Torelli{\text{${\mathcal I}$}}
\DeclareMathOperator{\Sp}{Sp}
\DeclareMathOperator{\SL}{SL}
\newcommand\Z{\text{$\mathbb{Z}$}}
\newcommand\Q{\text{$\mathbb{Q}$}}
\DeclareMathOperator{\HH}{H}
\newcommand\Figure[3]{
\begin{figure}[t]
\centering
\centerline{\psfig{file=#2,scale=60}}
\caption{#3}
\label{#1}
\end{figure}}
\newcommand\Ident{\text{$\mathbb{I}$}}
\newcommand\Zero{\text{$\mathbb{O}$}}
\newcommand\MatTwoTwo[4]{\text{$\left( \begin{smallmatrix} #1 & #2\\ #3 & #4 \end{smallmatrix} \right)$}}
\newcommand\ElemMat{\text{$\mathcal{E}$}}
\newcommand\SymMat{\text{$\mathcal{SE}$}}
\newcommand\SpGenX{\text{$\mathcal{X}$}}
\newcommand\SpGenY{\text{$\mathcal{Y}$}}
\newcommand\SpGenZ{\text{$\mathcal{Z}$}}
\newcommand\SpLie{\text{$\mathfrak{sp}$}}
\newcommand\SLLie{\text{$\mathfrak{sl}$}}
\begin{document}

\maketitle

\begin{abstract}
We calculate the abelianizations of the level $L$ subgroup of the genus $g$ mapping class group
and the level $L$ congruence subgroup of the $2g \times 2g$ symplectic group
for $L$ odd and $g \geq 3$.
\end{abstract}

\noindent
{\bf Historical note.}  I originally wrote this paper in March of 2008.  Towards the end of that month,
I gave a master class on the Torelli group at the University of Aarhus.  That master class ended in
a conference, and I had intended to speak about this paper at that conference.  However, I learned
that both Bernard Perron and Masatoshi Sato had proven similar theorems and intended to speak about
them at the same conference!  Sato was a graduate student and had actually proved somewhat better
results (in particular, he could deal with $L=2$), so I decided not to publish this paper.  
Sato's work appeared in \cite{SatoAbel}, and
Perron's work was sketched in \cite{PerronAbel}.  See my later paper \cite{PutmanPicardGroupLevel} for
results for $L$ not divisible by $4$.  Dealing with the case where $L$ is divisible by $4$ is still open.

\section{Introduction}

Let $\Sigma_{g,n}$ be an orientable genus $g$ surface with $n$ boundary components and let
$\Mod_{g,n}$ be its {\em mapping class group}, that is, the group $\pi_0(\Diff^{+}(\Sigma_{g,n},\partial \Sigma_{g,n}))$.
This is the (orbifold) fundamental group of the moduli space of Riemann surfaces and has been
intensely studied by many authors.  For $n \in \{0,1\}$, the action of $\Mod_{g,n}$ on $\HH_1(\Sigma_{g,n};\Z)$
induces a surjective representation of $\Mod_{g,n}$ into the symplectic group whose kernel $\Torelli_{g,n}$
is known as the {\em Torelli group}.  This is summarized by the exact sequence
$$1 \longrightarrow \Torelli_{g,n} \longrightarrow \Mod_{g,n} \longrightarrow \Sp_{2g}(\Z) \longrightarrow 1.$$

For $L \geq 2$, let $\Sp_{2g}(\Z,L)$ denote the {\em level $L$ congruence subgroup} of $\Sp_{2g}(\Z)$, that is,
the subgroup of matrices that are equal to the identity modulo $L$.  The pull-back of $\Sp_{2g}(\Z,L)$ to
$\Mod_{g,n}$ is known as the {\em level $L$ subgroup} of $\Mod_{g,n}$ and is denoted by $\Mod_{g,n}(L)$.  The
group $\Mod_{g,n}(L)$ can also be described as the group of mapping classes that act trivially on 
$\HH_1(\Sigma_{g,n};\Z / L \Z)$.  It fits into an exact sequence
$$1 \longrightarrow \Torelli_{g,n} \longrightarrow \Mod_{g,n}(L) \longrightarrow \Sp_{2g}(\Z,L) \longrightarrow 1.$$
In \cite{HainTorelli}, Hain proved that the abelianization of $\Mod_{g,n}(L)$ consists entirely of torsion for $g \geq 3$ (an
alternate proof was given by McCarthy in \cite{McCarthyLevel}).
In this note, we compute this torsion for $L$ odd.

To state our theorem, we need some notation.  Denoting the $n \times n$ zero matrix by $\Zero_n$ and the $n \times n$
identity matrix by $\Ident_n$, let $\Omega_g$ 
be the matrix $\MatTwoTwo{\Zero_g}{\Ident_g}{-\Ident_g}{\Zero_g}$ (we will abuse notation and let
the entries of $\Omega_g$ lie in whatever ring we are considering at the moment).
By definition, the group $\Sp_{2g}(\Z)$ consists of $2g \times 2g$ integral matrices $X$ that satisfy 
$X^t \Omega_{g} X = \Omega_g$.  We will denote by $\SpLie_{2g}(L)$ the additive group of all $2g \times 2g$ matrices 
$A$ with entries in $\Z / L \Z$ that satisfy $A^t \Omega_{g} + \Omega_g A = 0$.

Our main theorem is as follows, and is proven in \S \ref{section:mainh1level}.

\begin{theorem}[{Integral $\HH_1$ of level $L$ subgroups}]
\label{theorem:mainh1}
For $g \geq 3$, $n \in \{0,1\}$, and $L$ odd, set $H(L) = \HH_1(\Sigma_{g,n};\Z / L \Z)$.  We then
have an exact sequence
$$0 \longrightarrow K \longrightarrow \HH_1(\Mod_{g,n}(L);\Z) \longrightarrow \SpLie_{2g}(L) \longrightarrow 0,$$
where $K = \wedge^3 H(L)$ if $n=1$ and $K = (\wedge^3 H(L))/H(L)$ if $n=0$.  
\end{theorem}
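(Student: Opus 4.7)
The plan is to apply the five-term exact sequence in integral group homology to the extension
$$1 \longrightarrow \Torelli_{g,n} \longrightarrow \Mod_{g,n}(L) \longrightarrow \Sp_{2g}(\Z,L) \longrightarrow 1,$$
obtaining
$$\HH_2(\Sp_{2g}(\Z,L)) \xrightarrow{d} \HH_1(\Torelli_{g,n})_{\Sp_{2g}(\Z,L)} \to \HH_1(\Mod_{g,n}(L)) \to \HH_1(\Sp_{2g}(\Z,L)) \to 0.$$
The rightmost term is $\SpLie_{2g}(L)$ by the companion theorem on the abelianization of the level-$L$ congruence subgroup (established elsewhere in the paper). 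By Johnson's theorem, $\HH_1(\Torelli_{g,n};\Z)$ equals $W := \wedge^3 H$ for $n=1$ and $W := (\wedge^3 H)/H$ for $n=0$, where $H = \HH_1(\Sigma_g;\Z)$. The theorem therefore reduces to identifying the coinvariants of $W$ with $K$ and showing $d = 0$.

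For the coinvariants, every $X \in \Sp_{2g}(\Z,L)$ satisfies $X \equiv \Ident \pmod{L}$, so $X \cdot v - v \in L \cdot \wedge^3 H$ for all $v \in \wedge^3 H$. This yields a surjection from the coinvariants of $\wedge^3 H$ onto $\wedge^3 H / L \wedge^3 H = \wedge^3 H(L)$. For the reverse inclusion I would check directly that the level-$L$ transvections $\Ident + L E$ generating $\Sp_{2g}(\Z,L)$ produce differences $X \cdot v - v$ spanning all of $L \cdot \wedge^3 H$, an elementary calculation made feasible by $g \geq 3$. The $n=0$ case is then obtained by pushing this through the $\Sp$-equivariant quotient $\wedge^3 H \twoheadrightarrow (\wedge^3 H)/H$ via right-exactness of coinvariants; $L$ odd eliminates any potential $2$-torsion subtleties in $W$.

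The main obstacle is the vanishing of $d$, equivalent to injectivity of the natural map $K \to \HH_1(\Mod_{g,n}(L);\Z)$ supplied by the five-term sequence. For this I would use Morita's extension of Johnson's homomorphism to a crossed homomorphism $\tilde{\tau}\colon \Mod_{g,n} \to \tfrac{1}{2} W$ over the full mapping class group. Since $L$ is odd, $2$ is a unit in $\Z / L \Z$, so reducing $\tilde{\tau}$ modulo $L$ yields a crossed homomorphism $\Mod_{g,n} \to K$ whose target is acted on through the quotient $\Mod_{g,n} \twoheadrightarrow \Sp_{2g}(\Z / L \Z)$. Restricting to $\Mod_{g,n}(L)$, the action on $K$ becomes trivial by the very definition of the level-$L$ subgroup, so the crossed-homomorphism identity collapses to ordinary additivity, producing a bona fide homomorphism $\bar{\tau}\colon \Mod_{g,n}(L) \to K$. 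Its restriction to $\Torelli_{g,n}$ is the mod-$L$ reduction of the Johnson homomorphism, so on $\HH_1$ the induced map $\bar{\tau}_*$ provides a left inverse to the canonical map $K \to \HH_1(\Mod_{g,n}(L);\Z)$. This forces $d = 0$ and yields the claimed short exact sequence.
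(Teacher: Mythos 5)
Your overall architecture --- the five-term sequence for $1 \to \Torelli_{g,n} \to \Mod_{g,n}(L) \to \Sp_{2g}(\Z,L) \to 1$, the identification of the coinvariants of $\wedge^3 H$ with $\wedge^3 H(L)$, and the use of a Johnson-type homomorphism on all of $\Mod_{g,n}(L)$ to produce a left inverse and hence injectivity --- matches the paper. Your route to that last homomorphism (reducing Morita's crossed homomorphism $\Mod_{g,n} \to \tfrac{1}{2}W$ mod $L$, using that $2$ is a unit and that $\Mod_{g,n}(L)$ acts trivially on $W \otimes \Z/L\Z$) is a legitimate alternative to the paper's citation of the Broaddus--Farb--Putman relative Johnson homomorphisms, and the left-inverse logic is identical.

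However, there is a genuine gap: you misstate Johnson's theorem. For $g \geq 3$ one has $\HH_1(\Torelli_{g,1};\Z) \cong \wedge^3 H \oplus (\text{$2$-torsion})$, where the $2$-torsion summand is the large Birman--Craggs--Johnson piece; it is not true that the abelianization is just $\wedge^3 H$. Your parenthetical that ``$L$ odd eliminates any potential $2$-torsion subtleties in $W$'' does not address this: the coinvariants of the $2$-torsion summand under $\Sp_{2g}(\Z,L)$ do \emph{not} vanish (the action factors through $\Sp_{2g}(\Z/2\Z)$, onto which $\Sp_{2g}(\Z,L)$ surjects for $L$ odd, and e.g.\ the summand corresponding to the constant Boolean polynomial survives to the coinvariants). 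So the third term of your five-term sequence is $(\wedge^3 H)_{\Sp_{2g}(\Z,L)} \oplus (\text{something $2$-torsion})$, and your crossed-homomorphism argument only shows that the first factor injects into $\HH_1(\Mod_{g,n}(L);\Z)$; it says nothing about the image of the second factor, which a priori contributes extra $2$-torsion to the kernel $K$. You must separately prove that this image vanishes. The paper does so via its Lemma \ref{lemma:kill2torsion}: an iterated crossed lantern relation shows $L[T_{x_1}T_{x_2}^{-1}]=0$ in $\HH_1(\Mod_{g,n}(L);\Z)$ for every bounding pair map, so by Johnson's generation theorem the entire image of $\HH_1(\Torelli_{g,n};\Z)$ in $\HH_1(\Mod_{g,n}(L);\Z)$ is $L$-torsion; an element that is simultaneously $2$-torsion and $L$-torsion with $L$ odd is zero. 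Some argument of this kind (or another mechanism killing the Birman--Craggs--Johnson classes in $\HH_1(\Mod_{g,n}(L);\Z)$) is indispensable and is missing from your proposal.
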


\begin{remark}
The condition $g \geq 3$ is necessary, since in \cite{McCarthyLevel} McCarthy proves that if $2$ or $3$ divides
$L$, then $\Mod_{2}(L)$ surjects onto $\Z$.  A computation of $\HH_1(\Mod_{2,n}(L);\Z)$ (or even
$\HH_1(\Mod_{2,n}(L);\Q)$) would be very interesting.
\end{remark}

We now describe the sources for the terms in the exact sequence of Theorem \ref{theorem:mainh1}.
The kernel $K$ comes from the {\em relative Johnson homomorphisms} of Broaddus-Farb-Putman 
\cite{BroaddusFarbPutmanDistortion}.  For $\Mod_{g,n}(L)$, these are surjective homomorphisms
$$\tau_{g,1}(L) : \Mod_{g,1}(L) \longrightarrow \wedge^3 H(L)$$
and
$$\tau_g(L) : \Mod_{g}(L) \longrightarrow (\wedge^3 H(L)) / H(L)$$
which are related to the celebrated Johnson homomorphisms on the Torelli group (see 
\S \ref{section:torelli} and \S \ref{section:mainh1level}).

The cokernel $\SpLie_{2g}(L)$ is the abelianization of $\Sp_{2g}(\Z,L)$.  Now, 
the isomorphism 
$$\HH_1(\Sp_{2g}(\Z,L);\Z) \cong \SpLie_{2g}(L)$$ 
can be deduced from general theorems of Borel on arithmetic groups (see \cite[\S 2.5]{BorelAbel}); however, Borel's
results are much more general than we need and it takes some work to derive
the desired result from them.  We instead imitate a beautiful argument of Lee-Szczarba \cite{LeeSzCong},
who prove that
$$\HH_1(\SL_{n}(\Z,L);\Z) \cong \SLLie_{n}(L)$$
for $n \geq 3$.  Here $\SL_{n}(\Z,L)$ is the level $L$ congruence subgroup of $\SL_{n}(\Z)$ and $\SLLie_{n}(L)$ is
the additive group of $n \times n$ matrices with coefficients in $\Z / L \Z$ and trace $0$.  The
proof of the following theorem is contained in \S \ref{section:mainh1sp}.

\begin{theorem}[{Integral $\HH_1$ of $\Sp_{2g}(\Z,L)$}]
\label{theorem:h1spl}
For $g \geq 3$ and $L$ odd, we have
$$\HH_1(\Sp_{2g}(\Z,L);\Z) \cong \SpLie_{2g}(L).$$
Moreover, $[\Sp_{2g}(\Z,L),\Sp_{2g}(\Z,L)] = \Sp_{2g}(\Z,L^2)$.
\end{theorem}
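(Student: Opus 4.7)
The plan is to imitate Lee-Szczarba \cite{LeeSzCong}. Every $X \in \Sp_{2g}(\Z,L)$ has a unique expression $X = \Ident_{2g} + L M$ with $M$ an integer matrix, and expanding $X^t \Omega_g X = \Omega_g$ gives
$$L(M^t \Omega_g + \Omega_g M) + L^2 M^t \Omega_g M = 0,$$
so $M \bmod L$ lies in $\SpLie_{2g}(L)$. Setting $\phi(X) = M \bmod L$, the identity $(\Ident + L A)(\Ident + L B) = \Ident + L(A+B) + L^2 AB$ shows that $\phi \colon \Sp_{2g}(\Z,L) \to \SpLie_{2g}(L)$ is a homomorphism, and by inspection $\ker \phi = \Sp_{2g}(\Z, L^2)$.

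For surjectivity of $\phi$, I would use symplectic transvections. For $v \in \Z^{2g}$, the transvection $T_v(w) = w + \langle v, w \rangle v$ has the form $\Ident + A_v$ with $A_v = v v^t \Omega_g$; since $\langle v, v \rangle = 0$, we have $A_v^2 = 0$, so $T_v^L = \Ident + L A_v$ lies in $\Sp_{2g}(\Z, L)$ with $\phi(T_v^L) = A_v \bmod L$. The assignment $A \mapsto \Omega_g A$ identifies $\SpLie_{2g}$ with the symmetric $2g \times 2g$ matrices, and sends the $A_v$ (up to sign) to the rank-one symmetric matrices $(\Omega_g v)(\Omega_g v)^t$. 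The polarization identity
$$2(u v^t + v u^t) = (u+v)(u+v)^t - (u - v)(u - v)^t$$
shows that these span all symmetric matrices over any ring in which $2$ is invertible. Since $L$ is odd, $2$ is a unit in $\Z/L\Z$, so the $A_v \bmod L$ span $\SpLie_{2g}(L)$, and $\phi$ is surjective.

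Because $\SpLie_{2g}(L)$ is abelian, one immediately obtains $[\Sp_{2g}(\Z,L),\Sp_{2g}(\Z,L)] \subseteq \Sp_{2g}(\Z,L^2)$, together with a surjection $\HH_1(\Sp_{2g}(\Z,L);\Z) \twoheadrightarrow \SpLie_{2g}(L)$. The main obstacle is the reverse inclusion, i.e., that every element of $\Sp_{2g}(\Z,L^2)$ is a product of commutators in $\Sp_{2g}(\Z,L)$. The key computation is
$$[\Ident + L A, \Ident + L B] \equiv \Ident + L^2 [A,B] \pmod{L^3},$$
so commutators in $\Sp_{2g}(\Z,L)$ hit every Lie bracket $[A,B] \bmod L$ in the quotient $\Sp_{2g}(\Z, L^2)/\Sp_{2g}(\Z, L^3)$. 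Mirroring Lee-Szczarba's formula $\Ident + L^2 e_{ij} = [\Ident + L e_{ik}, \Ident + L e_{kj}]$ in $\SL_n$, the plan is to produce explicit commutator identities for a normal generating set of $\Sp_{2g}(\Z,L^2)$ in $\Sp_{2g}(\Z,L)$. The analog in the symplectic group comes from Chevalley commutator relations: for each root $\gamma$ in the type $C_g$ root system, one writes $\gamma = \alpha + \beta$ as a sum of two roots and expresses $\Ident + L^2 X_\gamma$ as the commutator of $L$-th powers of elements of the $\alpha$- and $\beta$-root subgroups. The hypothesis $g \geq 3$ provides enough room in the root system to carry out these decompositions uniformly, paralleling Lee-Szczarba's need for a third index in the $\SL_n$ case. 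Organizing these Chevalley-style identities across all normal generators, and verifying that the corrections coming from the $O(L^3)$ terms can themselves be absorbed into the commutator subgroup, is the main technical work.
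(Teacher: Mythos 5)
Your overall strategy is the same as the paper's: define $\phi(\Ident_{2g}+LA)=A \bmod L$, check it is a homomorphism onto $\SpLie_{2g}(L)$ with kernel $\Sp_{2g}(\Z,L^2)$, and then reduce everything to showing $\Sp_{2g}(\Z,L^2)\subseteq[\Sp_{2g}(\Z,L),\Sp_{2g}(\Z,L)]$. Your surjectivity argument via powers of symplectic transvections and polarization is a nice touch --- the paper leaves surjectivity implicit --- and it is correct (in fact rank-one symmetric matrices span all symmetric matrices over any commutative ring, so $2$ being a unit is not even needed there).

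The genuine gap is in the one step you yourself flag as ``the main technical work,'' which is exactly the content of the paper's Lemma \ref{lemma:spcommutatorcalc}, and your proposed route to it does not close up as stated. The congruence $[\Ident_{2g}+LA,\Ident_{2g}+LB]\equiv\Ident_{2g}+L^2[A,B]\pmod{L^3}$ only shows that commutators fill out $\Sp_{2g}(\Z,L^2)$ modulo $\Sp_{2g}(\Z,L^3)$; iterating this down the filtration $\Sp_{2g}(\Z,L^n)$ never terminates, and passing to the intersection $\bigcap_n [\Sp_{2g}(\Z,L),\Sp_{2g}(\Z,L)]\cdot\Sp_{2g}(\Z,L^n)$ requires knowing the commutator subgroup is closed in the congruence topology, which is not free. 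What is actually needed (and what both Lee--Szczarba and the paper supply) are \emph{exact} commutator identities for a generating set: the paper first invokes Bass--Milnor--Serre to generate $\Sp_{2g}(\Z,L^2)$ by the elementary matrices $\SpGenX_{i,j}^g(L^2)$ and $\SpGenY_{i,j}^g(L^2)$, then writes, e.g., $\SpGenX_{i,j}^g(L^2)=[\SpGenX_{i,k}^g(L),\SpGenZ_{k,j}^g(L)]$ exactly for $i\neq j$. The hardest case, which your Chevalley sketch glosses over, is the long-root generator $\SpGenX_{i,i}^g(L^2)$: there the commutator relations only produce the parameter up to a factor of $2$ (and an extra factor of $L$), and the paper handles this by writing $1=2N+L$ and splitting $\SpGenX_{i,i}^g(L^2)=\SpGenX_{i,i}^g(2NL^2)\cdot\SpGenX_{i,i}^g(L^3)$, each factor being an explicit product of commutators. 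This is precisely where the hypothesis that $L$ is odd enters the commutator computation, and your proposal neither identifies this case nor supplies the identities, so the theorem is not yet proved.
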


\begin{remark}
It is unclear whether the hypothesis that $L$ is odd is necessary for Theorems \ref{theorem:mainh1} or \ref{theorem:h1spl},
but it is definitely used in both proofs.
\end{remark}

\begin{acknowledgements}
I wish to thank Nate Broaddus and Benson Farb, as portions of this paper came out of conversations
arising from our joint work \cite{BroaddusFarbPutmanDistortion}.  I also wish to thank Tom Church
for several useful comments and suggestions.
\end{acknowledgements}

\section{The abelianization of $\boldsymbol{\Sp_{2g}(\Z,L)}$}
\label{section:mainh1sp}

We will need the following notation.

\begin{definition}
For $1 \leq i,j \leq n$, let $\ElemMat_{i,j}^n(r)$ be the $n \times n$ matrix
with an $r$ at position $(i,j)$ and $0$'s elsewhere.  Similarly, let $\SymMat_{i,j}^n(r)$ be the
$n \times n$ matrix with an $r$ at positions $(i,j)$ and $(j,i)$ and $0$'s elsewhere.
\end{definition}

\begin{definition}
For $1 \leq i,j \leq g$, denote by $\SpGenX_{i,j}^g(r)$ the matrix
$\MatTwoTwo{\Ident_g}{\Zero_g}{\SymMat_{i,j}^g(r)}{\Ident_g}$, by $\SpGenY_{i,j}^g(r)$ the matrix
$\MatTwoTwo{\Ident_g}{\SymMat_{i,j}^g(r)}{\Zero_g}{\Ident_g}$,
and by $\SpGenZ_{i,j}^g(r)$ the matrix 
$\MatTwoTwo{\Ident_g+\ElemMat_{i,j}^g(r)}{\Zero_g}{\Zero_g}{\Ident_g - \ElemMat_{j,i}^g(r)}$.
\end{definition}

Observe that $\SpGenX_{i,j}^g(L),\SpGenY_{i,j}^g(L) \in \Sp_{2g}(\Z,L)$ for all $1 \leq i,j \leq g$ and
that $\SpGenZ_{i,j}^g(L) \in \Sp_{2g}(\Z,L)$ for $1 \leq i,j \leq g$ with $i \neq j$.
The following theorem forms part of Bass-Milnor-Serre's solution to the congruence
subgroup problem for the symplectic group.

\begin{theorem}[{Bass-Milnor-Serre \cite[Theorem 12.4, Corollary 12.5]{BassMilnorSerre}}]
\label{theorem:splgen}
For $g \geq 2$ and $L \geq 1$, the group $\Sp_{2g}(\Z,L)$ is normally generated by
$$\{\text{$\SpGenX_{i,j}^g(L)$ $|$ $1 \leq i,j \leq g$}\} \cup \{\text{$\SpGenY_{i,j}^g(L)$ $|$ $1 \leq i,j \leq g$}\}.$$
\end{theorem}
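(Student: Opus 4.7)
My plan is to combine a normal closure argument with an explicit symplectic row reduction. Let $E_g(L)$ denote the subgroup of $\Sp_{2g}(\Z,L)$ generated by the matrices $\SpGenX_{i,j}^g(L)$ and $\SpGenY_{i,j}^g(L)$; the goal is to prove $E_g(L) = \Sp_{2g}(\Z,L)$.

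First, I would establish that $E_g(L)$ is normal in $\Sp_{2g}(\Z)$. It is classical that $\Sp_{2g}(\Z)$ is generated by the $\SpGenX_{i,j}^g(1)$, $\SpGenY_{i,j}^g(1)$, and $\SpGenZ_{i,j}^g(1)$ (with the $\SpGenZ$'s obtainable as commutators of the $\SpGenX$'s and $\SpGenY$'s), so it suffices to compute the conjugates of each generator of $E_g(L)$ by each of these and verify that the result can be written as a product of $\SpGenX_{*,*}^g(L)$'s and $\SpGenY_{*,*}^g(L)$'s. This reduces to a finite list of Chevalley commutator identities in the type-$C_g$ root system and is a routine matrix computation.

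Next, for any $M \in \Sp_{2g}(\Z,L)$, I would perform an inductive row reduction. Writing $M$ in $g \times g$ block form as $\MatTwoTwo{A}{B}{C}{D}$ with $A \equiv D \equiv \Ident_g$ and $B \equiv C \equiv \Zero_g$ modulo $L$, I would use left multiplications by $\SpGenY_{i,j}^g(Lr)$ and $\SpGenX_{i,j}^g(Lr)$ — whose effect is to add $Lr$ times entries of one half of a column into the other half — to clear the first column of $M$ to $e_1$ via a symplectic variant of the Euclidean algorithm performed on entries divisible by $L$. Once the first column is $e_1$, the symplectic relations $M^t \Omega_g M = \Omega_g$ sharply constrain the rest of $M$, letting us further reduce to a block matrix containing a copy of $\Sp_{2(g-1)}(\Z,L)$. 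Inducting on $g$ then reduces everything to a base case $g = 2$, which can be settled by direct manipulation.

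The main obstacle lies in the second step: symplectic row operations are much more constrained than general ones, since every multiplication must preserve $M^t \Omega_g M = \Omega_g$. In particular, to mimic the standard $\SL_n$-style Euclidean reduction, one must frequently manufacture "off-diagonal" operations in the spirit of $\SpGenZ_{i,j}^g(L)$ as commutators of $\SpGenX$'s and $\SpGenY$'s, exploiting the $\SpLie_{2g}$ structure. Verifying that these restricted operations suffice to clear an arbitrary column, and that they do not irreparably disturb entries already reduced, is where the bulk of the technical work lies, and is the reason the hypothesis $g \geq 2$ is needed — one requires at least one additional symplectic pair of coordinates to maneuver with.
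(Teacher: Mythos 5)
The paper does not prove this statement at all --- it is imported verbatim from Bass--Milnor--Serre --- so the only question is whether your argument could stand in for that citation. It cannot, and the obstruction is already visible in the paper's own \S\ref{section:mainh1sp}. The map $\phi \colon \Sp_{2g}(\Z,L) \to \SpLie_{2g}(L)$ sending $\Ident_{2g}+LA$ to $A \bmod L$ is a homomorphism to an abelian group, and it is conjugation-equivariant: $\phi(gXg^{-1}) = \bar{g}\,\phi(X)\,\bar{g}^{-1}$. Every one of the proposed generators $\SpGenX_{i,j}^g(L)$, $\SpGenY_{i,j}^g(L)$ has $\phi$-image in the subgroup $V$ of block-off-diagonal elements $\MatTwoTwo{\Zero_g}{S}{T}{\Zero_g}$ of $\SpLie_{2g}(L)$, so your group $E_g(L)$ satisfies $\phi(E_g(L)) \subseteq V$. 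Since $V$ is not invariant under conjugation by $\Sp_{2g}(\Z/L\Z)$, the group $E_g(L)$ is \emph{not} normal in $\Sp_{2g}(\Z)$, so the ``finite list of Chevalley identities'' you defer to in your first step cannot close up. Worse, $\phi(\SpGenZ_{1,2}^g(L))$ is a nonzero block-diagonal element of $\SpLie_{2g}(L)$ lying outside $V$, so $\SpGenZ_{1,2}^g(L) \in \Sp_{2g}(\Z,L)$ but $\SpGenZ_{1,2}^g(L) \notin E_g(L)$: for $L \geq 2$ the literal statement you are trying to prove is false, and no proof strategy can succeed. What Bass--Milnor--Serre actually prove (their Theorem 12.4 is phrased via symplectic transvections $\sigma_{u,a}$ over \emph{all} unimodular vectors $u$) is generation of $\Sp_{2g}(\Z,L)$ by the full set of $\Sp_{2g}(\Z)$-conjugates of these matrices, i.e.\ normal generation; that weaker statement is what the rest of the paper genuinely needs.

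Independently of this, your second step drastically underestimates the difficulty of the reduction. Using only level-$L$ operations, killing the entry $La_2$ of the first column against the pivot $1+La_1$ means adding integer multiples of $L(1+La_1)$ to it, which changes $a_2$ only by multiples of $1+La_1$; symmetrically, $a_1$ can be changed only by multiples of $La_2$. This restricted ``Euclidean algorithm'' does not terminate at $e_1$ by gcd considerations --- the obstruction to completing it is precisely the (symplectic) Mennicke symbol, and the proof that this symbol vanishes over $\Z$ is the analytic heart of the congruence subgroup problem, requiring Dirichlet's theorem on primes in arithmetic progressions together with reciprocity laws. Likewise, $g=2$ is not a base case ``settled by direct manipulation''; it is the hardest case in Bass--Milnor--Serre. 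In short: step 1 is false, and step 2 is not a routine row reduction but the entire content of the theorem being cited.
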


\begin{remark}
We emphasize that the matrices $\SpGenZ_{i,j}^g(L)$ are not needed -- the proof of \cite[Lemma 13.1]{BassMilnorSerre} contains
an explicit formula for them in terms of the $\SpGenX_{i,j}^g$ and the $\SpGenY_{i,j}^g$.
\end{remark}

\noindent
Using this, we can prove the following.

\begin{lemma}
\label{lemma:spcommutatorcalc}
For $g \geq 3$ and $L$ odd, we have $\Sp_{2g}(\Z,L^2) < [\Sp_{2g}(\Z,L),\Sp_{2g}(\Z,L)]$.
\end{lemma}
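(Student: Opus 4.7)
The plan is to apply Theorem~\ref{theorem:splgen} with $L$ replaced by $L^2$, reducing the claim to showing that each generator $\SpGenX_{i,j}^g(L^2)$ and $\SpGenY_{i,j}^g(L^2)$ of $\Sp_{2g}(\Z,L^2)$ lies in $[\Sp_{2g}(\Z,L),\Sp_{2g}(\Z,L)]$. I will treat the $\SpGenY$ generators; the $\SpGenX$ generators are handled by entirely parallel commutator computations with $\SpGenZ$ on the other side.

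The central calculation is of the commutator $[\SpGenZ_{i,k}^g(a), \SpGenY_{k,l}^g(b)]$. Writing $A = \Ident_g + \ElemMat_{i,k}^g(a)$ so that $\SpGenZ_{i,k}^g(a) = \MatTwoTwo{A}{\Zero_g}{\Zero_g}{(A^t)^{-1}}$, conjugating $\SpGenY_{k,l}^g(b)$ by $\SpGenZ_{i,k}^g(a)$ preserves the block form and replaces the upper-right symmetric block $\SymMat_{k,l}^g(b)$ by $A \SymMat_{k,l}^g(b) A^t$. A routine case analysis of $A\SymMat_{k,l}^g(b)A^t - \SymMat_{k,l}^g(b)$ yields three formulas: (i) when $i,k,l$ are pairwise distinct, $[\SpGenZ_{i,k}^g(a),\SpGenY_{k,l}^g(b)] = \SpGenY_{i,l}^g(ab)$; (ii) when $i \neq k$, $[\SpGenZ_{i,k}^g(a),\SpGenY_{k,k}^g(b)] = \SpGenY_{i,k}^g(ab)\,\SpGenY_{i,i}^g(a^2 b)$; (iii) when $i \neq k$, $[\SpGenZ_{i,k}^g(a),\SpGenY_{k,i}^g(b)] = \SpGenY_{i,i}^g(2ab)$. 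All right-hand side factors commute since they lie in the abelian subgroup of block matrices of the form $\MatTwoTwo{\Ident_g}{\ast}{\Zero_g}{\Ident_g}$.

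Specializing $a = b = L$, so that every input lies in $\Sp_{2g}(\Z,L)$, identity~(i) puts $\SpGenY_{i,l}^g(L^2)$ in $[\Sp_{2g}(\Z,L),\Sp_{2g}(\Z,L)]$ whenever $i \neq l$, using the hypothesis $g \geq 3$ to choose $k \notin \{i,l\}$. Handling the diagonal generator $\SpGenY_{i,i}^g(L^2)$ is more delicate: identity~(iii) yields $\SpGenY_{i,i}^g(2L^2)$ as a commutator, while identity~(ii), after cancelling the off-diagonal factor $\SpGenY_{i,k}^g(L^2)$ just shown to be a commutator, yields $\SpGenY_{i,i}^g(L^3)$. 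Since $L$ is odd, $\gcd(2L^2, L^3) = L^2$, so a Bezout combination inside the abelian one-parameter subgroup $\{\SpGenY_{i,i}^g(r) : r \in \Z\}$ expresses $\SpGenY_{i,i}^g(L^2)$ itself as a product of commutators.

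I expect the main obstacle to be precisely the factor of $2$ appearing in identity~(iii): it is the Chevalley structure constant attached to the long root $2e_i$ in type $C_g$ and cannot be eliminated from any single commutator of this shape. The companion odd coefficient $L^3$ arising from identity~(ii), together with $2L^2$, span the subgroup $L^2\Z$ exactly because $L$ is odd, which is precisely where the hypothesis on $L$ enters the argument.
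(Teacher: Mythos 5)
Your proposal is correct and follows essentially the same route as the paper: reduce to the Bass--Milnor--Serre generators, dispose of the off-diagonal generators with a single commutator against a $\SpGenZ_{i,k}^g(L)$, and for the diagonal (long-root) generators combine the $2L^2$-type and $L^3$-type commutators using the oddness of $L$. The paper's explicit identity $2N+L=1$ is exactly your Bezout combination written out (with the integer $N$ absorbed into the entry $NL$ of one commutator factor rather than taken as a power), and its choice of working with the $\SpGenX$ generators instead of the $\SpGenY$ generators is immaterial.
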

\begin{proof}
We must show that each normal generator of $\Sp_{2g}(\Z,L^2)$ given by Theorem \ref{theorem:splgen}
is contained in $[\Sp_{2g}(\Z,L),\Sp_{2g}(\Z,L)]$.  We will do the case of $\SpGenX_{i,j}^g(L^2)$; the
other case is similar.  Assume first that $i \neq j$. Since $g \geq 3$, there is some $1 \leq k \leq g$
so that $k \neq i,j$.  The following matrix identity then proves the desired claim:
$$\SpGenX_{i,j}^g(L^2) = [\SpGenX_{i,k}^g(L), \SpGenZ_{k,j}^g(L)].$$
Now assume that $i = j$.  Again, there exists some $1 \leq k_1<k_2 \leq g$ so that $k_1,k_2 \neq i$.  Also,
since $L$ is odd there exists some integer $N$ so that $2N+L = 1$.  We thus have
$$\SpGenX_{i,i}^g(L^2) = \SpGenX_{i,i}^g((2N+L)L^2) = \SpGenX_{i,i}^g(2N L^2) \cdot \SpGenX_{i,i}^g(L^3),$$
so the following matrix identities complete the proof:
\begin{align*}
\SpGenX_{i,i}^g(2N L^2) &= [\SpGenX_{i,k_1}^g(N L), \SpGenZ_{k_1,i}^g(L)],\\
\SpGenX_{i,i}^g(L^3)    &= [\SpGenX_{k_1,k_1}^g(L), \SpGenZ_{k_1,i}^g(L)] \cdot [\SpGenZ_{k_2,i}^g(L), \SpGenX_{k_1,k_2}^g(L)]. \qedhere
\end{align*}
\end{proof}

\begin{proof}[Proof of Theorem \ref{theorem:h1spl}]
We begin by defining a function $\phi : \Sp_{2g}(\Z,L) \rightarrow \SpLie_{2g}(L)$.  Consider
any matrix $X \in \Sp_{2g}(\Z,L)$.  Write $X = \Ident_{2g} + L A$, and define
$$\phi(X) = A \quad \pmod{L}.$$
We claim that $\phi(X) \in \SpLie_{2g}(L)$.  Indeed, by the definition of the symplectic group
we have $X^t \Omega_{g} X = \Omega_g$.  Writing $X = \Ident_{2g} + L A$ and expanding out, we have
$$\Omega_g + L (A^t \Omega_{g} + \Omega_{g} A) + L^2 (A^t \Omega_{g} A) = \Omega_g.$$
We conclude that modulo $L$ we have $A^t \Omega_{g} + \Omega_{g} A=0$, as desired.

Next, we prove that $\phi$ is a homomorphism.  Consider $X,Y \in \Sp_{2g}(\Z,L)$ with
$X = \Ident_{2g} + L A$ and $Y = \Ident_{2g} + L B$.  Thus $X Y = \Ident_{2g} + L(A+B) + L^2 A B$, so
modulo $L$ we have $\phi(XY) = A+B$, as desired.

The fact that $\phi$ is surjective is a fun exercise.

Observe now that $\Ker(\phi) = \Sp_{2g}(\Z,L^2)$.  Since $\SpLie_{2g}(L)$ is abelian, this implies
that $[\Sp_{2g}(\Z,L),\Sp_{2g}(\Z,L)] < \Sp_{2g}(\Z,L^2)$.  Lemma \ref{lemma:spcommutatorcalc} then allows
us to conclude that $\Ker(\phi)=\Sp_{2g}(\Z,L^2) = [\Sp_{2g}(\Z,L),\Sp_{2g}(\Z,L)]$, and the theorem follows.
\end{proof}

\section{The Torelli group}
\label{section:torelli}

We now review some facts about $\Torelli_{g,n}$.

\begin{definition}
Let $n \in \{0,1\}$.  A {\em bounding pair} on $\Sigma_{g,n}$ is a pair $\{x_1,x_2\}$ of
disjoint nonhomotopic nonseparating curves on $\Sigma_{g,n}$ so that $x_1 \cup x_2$ separates
$\Sigma_{g,n}$.  Letting $T_{\gamma}$ denote the Dehn twist about a simple closed curve
$\gamma$, the {\em bounding pair map} associated to a bounding pair $\{x_1,x_2\}$ is
$T_{x_1} T_{x_2}^{-1}$.
\end{definition}

\noindent
Observe that if $\{x_1,x_2\}$ is a bounding pair, then $T_{x_1} T_{x_2}^{-1} \in \Torelli_{g,n}$.  Building 
on work of Birman \cite{BirmanSymplectic} and Powell \cite{PowellTorelli}, Johnson
proved the following.

\begin{theorem}[{Johnson, \cite{JohnsonFirst}}]
\label{theorem:torelligenerators}
For $g \geq 3$ and $n \in \{0,1\}$, the group $\Torelli_{g,n}$ is generated by bounding pair maps.
\end{theorem}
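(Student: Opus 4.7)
The plan is to combine the Birman--Powell generating set for $\Torelli_{g,n}$ with a careful application of the lantern relation.

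First, by Birman \cite{BirmanSymplectic} and Powell \cite{PowellTorelli}, the group $\Torelli_{g,n}$ for $g \geq 2$ and $n \in \{0,1\}$ is generated by bounding pair maps together with Dehn twists about separating simple closed curves. So it suffices to prove that when $g \geq 3$, every separating twist $T_\gamma$ lies in the subgroup generated by bounding pair maps.

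For this, I would apply the lantern relation to a well-chosen embedded four-holed sphere $P \subset \Sigma_{g,n}$. Recall the lantern: if $b_1,b_2,b_3,b_4$ are the boundary components of $P$ and $c_{ij}$ is the simple closed curve in $P$ enclosing $b_i$ and $b_j$, then $T_{b_1}T_{b_2}T_{b_3}T_{b_4} = T_{c_{12}}T_{c_{13}}T_{c_{23}}$ in $\Mod_{g,n}$. I would embed $P$ so that $b_4$ is isotopic to $\gamma$ and the remaining three $b_i$ together with all three $c_{jk}$ are non-separating in $\Sigma_{g,n}$. Since $\gamma$ is separating, $[b_4]=0$ in $\HH_1(\Sigma_{g,n};\Z)$, so the relation $[b_1]+[b_2]+[b_3]+[b_4]=0$ (coming from $\partial P=0$ in $\Sigma_{g,n}$) gives $[b_1]+[b_2]+[b_3]=0$. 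Combined with $[c_{ij}]=[b_i]+[b_j]$, this yields $[c_{ij}]=-[b_k]$ whenever $\{i,j,k\}=\{1,2,3\}$, so each pair $\{c_{ij},b_k\}$ is a bounding pair in $\Sigma_{g,n}$. Moreover, all $b_i$ and $c_{jk}$ are pairwise disjoint (as $c_{jk}$ is interior to $P$ and $b_i$ lies on $\partial P$), so $T_{b_i}$ commutes with every $T_{c_{jk}}$ and with each other $T_{b_j}$. Rearranging the lantern identity using these commutations produces
\begin{equation*}
T_\gamma = (T_{c_{12}} T_{b_3}^{-1})(T_{c_{13}} T_{b_2}^{-1})(T_{c_{23}} T_{b_1}^{-1}),
\end{equation*}
exhibiting $T_\gamma$ as a product of three bounding pair maps. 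The case $n=0$ can then be reduced to the case $n=1$ via the natural surjection $\Torelli_{g,1}\twoheadrightarrow\Torelli_{g,0}$ induced by capping off the boundary, which carries bounding pair maps to bounding pair maps.

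The main obstacle is the topological step of actually embedding $P$ so that all six curves $b_1,b_2,b_3,c_{12},c_{13},c_{23}$ are non-separating in $\Sigma_{g,n}$. This is where the hypothesis $g \geq 3$ enters essentially: one needs enough genus on both sides of $\gamma$ to accommodate three non-separating boundary components of $P$ whose ``encircling'' diagonals within $P$ are themselves non-separating in the ambient surface. The analogous construction breaks down in genus $2$, consistent with Mess's theorem that $\Torelli_2$ is an infinitely generated free group on separating twists.
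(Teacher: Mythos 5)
This theorem is quoted in the paper as an external result of Johnson \cite{JohnsonFirst}; the paper supplies no proof of its own, so there is nothing internal to compare against. Your sketch is, in outline, exactly Johnson's original argument: reduce to the Birman--Powell generating set (separating twists together with bounding pair maps) and then use the lantern relation to convert each separating twist into a product of three bounding pair maps. The algebra is right: the boundary twists $T_{b_i}$ commute with each other and with the $T_{c_{jk}}$, so the lantern identity rearranges to $T_{\gamma} = (T_{c_{12}}T_{b_3}^{-1})(T_{c_{13}}T_{b_2}^{-1})(T_{c_{23}}T_{b_1}^{-1})$, and $[c_{ij}]=-[b_k]$ together with the standard fact that two disjoint homologous nonseparating curves cobound a subsurface makes each factor a genuine bounding pair map. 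The one step you flag but do not carry out --- embedding the lantern --- is where the real content lies, and the bookkeeping is worth recording: if $b_4$ is to be isotopic to $\gamma$ and the complement of $P$ on the $P$-side of $\gamma$ is to be connected (which is what forces all six curves to be nonseparating and each $c_{ij}\cup b_k$ to separate), an Euler characteristic count shows that side of $\gamma$ must have genus at least $2$; since $g\geq 3$, at least one side of any separating curve has genus $\geq 2$, and you place $P$ there. This is precisely where genus $2$ breaks down, consistent with your closing remark. The reduction of the closed case to the one-boundary case via the capping surjection is also fine (images of bounding pair maps are bounding pair maps or trivial). So the proposal is a correct outline of the standard proof, modulo actually exhibiting the embedding of $P$.
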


\begin{remark}
In fact, under the hypotheses of this theorem Johnson later proved that finitely many bounding
pair maps suffice \cite{JohnsonFinite}.  This should be contrasted with work of McCullough-Miller
\cite{McCulloughMillerTorelli} that says that for $n \in \{0,1\}$, the group $\Torelli_{2,n}$ is {\em not} finitely generated.
\end{remark}

We will also need Johnson's computation of the abelianization of $\Torelli_{g,n}$.

\begin{theorem}[{Johnson, \cite{JohnsonAbel}}]
\label{theorem:h1torelli}
Let $g \geq 3$, and set $H = \HH_1(\Sigma_g;\Z) \cong \HH_1(\Sigma_{g,1};\Z)$.  Then
$$\HH_1(\Torelli_{g,1};\Z) \cong \wedge^3 H \oplus (\text{$2$-torsion})$$
and
$$\HH_1(\Torelli_g;\Z) \cong ((\wedge^3 H) / H) \oplus (\text{$2$-torsion}).$$
\end{theorem}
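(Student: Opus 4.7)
The plan is to construct Johnson's homomorphism $\tau_{g,1}\colon \Torelli_{g,1} \to \wedge^3 H$ as a primary invariant, prove it is surjective using Theorem \ref{theorem:torelligenerators}, deduce the closed case via a capping-off exact sequence, and then show that all remaining abelianization information is $2$-torsion captured by a secondary Birman--Craggs--Johnson invariant.

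\textbf{Construction of $\tau_{g,1}$.} Let $\pi = \pi_1(\Sigma_{g,1})$, which is free of rank $2g$, and let $\pi_k$ denote the $k$-th term of its lower central series. The quotient $\pi/\pi_3$ sits in a central extension
$$1 \longrightarrow \pi_2/\pi_3 \longrightarrow \pi/\pi_3 \longrightarrow H \longrightarrow 1$$
with $\pi_2/\pi_3 \cong \wedge^2 H$. For $\phi \in \Torelli_{g,1}$, the assignment $x \mapsto \phi(x) x^{-1}$ descends to a homomorphism $H \to \wedge^2 H$. Using the symplectic intersection form to identify $H^\ast \cong H$, I would verify that the corresponding element of $H \otimes \wedge^2 H$ lies in the subspace $\wedge^3 H$, and set $\tau_{g,1}(\phi)$ equal to this element.

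\textbf{Surjectivity and the closed case.} For a bounding pair $\{x_1,x_2\}$ cobounding a subsurface $S \subset \Sigma_{g,1}$ with $[x_1]=c \in H$ and $\omega_S \in \wedge^2 \HH_1(S;\Z) \subset \wedge^2 H$ the symplectic form of $S$, a direct computation yields
$$\tau_{g,1}(T_{x_1} T_{x_2}^{-1}) = c \wedge \omega_S.$$
Varying the bounding pair and invoking Theorem \ref{theorem:torelligenerators}, these elements visibly span $\wedge^3 H$, so $\tau_{g,1}$ is surjective. For the closed case, the kernel of $\Torelli_{g,1} \twoheadrightarrow \Torelli_g$ coming from the Birman exact sequence contains $T_\partial$ together with point-pushing classes lying in $\Torelli_{g,1}$. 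Its image under $\tau_{g,1}$ is exactly the canonical copy $H \hookrightarrow \wedge^3 H$ given by $h \mapsto h \wedge \omega$, where $\omega \in \wedge^2 H$ is the intersection class of $\Sigma_g$; this yields the induced surjection $\tau_g\colon \Torelli_g \twoheadrightarrow (\wedge^3 H)/H$.

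\textbf{The kernel and the main obstacle.} The hard step is to show that $\Ker(\tau_{g,1})$ contributes only $2$-torsion to $\HH_1(\Torelli_{g,1};\Z)$. For this, I would introduce the Birman--Craggs--Johnson homomorphism $\sigma\colon \Torelli_{g,1} \to B_g$, where $B_g$ is an $\mathbb{F}_2$-vector space built from Rokhlin invariants of the integral homology $3$-spheres obtained by regluing a Heegaard splitting of $\Sphere{3}$ along an element of $\Torelli_{g,1}$. The goal is then to prove that $(\tau_{g,1},\sigma)$ induces the abelianization of $\Torelli_{g,1}$ modulo the single evident relation between $\tau_{g,1} \bmod 2$ and $\sigma$. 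The main obstacle is precisely this completeness assertion: ruling out all further abelianization invariants requires a careful accounting of commutator relations among bounding pair maps, which is the essential content of Johnson's paper \cite{JohnsonAbel}.
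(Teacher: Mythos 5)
This statement is quoted in the paper as an external result of Johnson \cite{JohnsonAbel}; the paper supplies no proof of its own, so there is nothing internal to compare against. Your outline is a faithful description of the strategy Johnson actually uses: the construction of $\tau_{g,1}$ via the action on $\pi/\pi_3$, the formula $\tau_{g,1}(T_{x_1}T_{x_2}^{-1}) = c \wedge \omega_S$ for bounding pair maps, surjectivity from Theorem \ref{theorem:torelligenerators}, the passage to the closed case by quotienting by the point-pushing image $H \wedge \omega \cong H$, and the Birman--Craggs--Johnson homomorphism $\sigma$ as the carrier of the $2$-torsion. All of that is correct and standard.

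However, as a proof there is a genuine gap, and you have located it yourself: everything you actually establish is that $\tau_{g,1}$ (resp.\ $\tau_g$) is a surjection onto $\wedge^3 H$ (resp.\ $(\wedge^3 H)/H$), which only gives $\HH_1(\Torelli_{g,1};\Z) \twoheadrightarrow \wedge^3 H$. The content of the theorem is the reverse containment: that $\Ker(\tau_{g,1})$ maps into the $2$-torsion of the abelianization, equivalently that $(\tau_{g,1},\sigma)$ detects all of $\HH_1(\Torelli_{g,1};\Z)$ up to the one known relation between $\tau_{g,1} \bmod 2$ and $\sigma$. Your proposal states this as ``the goal'' and identifies the obstacle, but offers no mechanism for overcoming it. Johnson's argument for this step is not a routine verification: it rests on his earlier finite generation theorem and presentation-theoretic work (the papers preceding \cite{JohnsonAbel} in the series), together with a lengthy analysis showing that the subgroup generated by certain explicit elements of $\Ker(\tau_{g,1})$ exhausts the commutator quotient modulo $2$-torsion. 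Without that input, the decomposition $\HH_1(\Torelli_{g,1};\Z) \cong \wedge^3 H \oplus (\text{$2$-torsion})$ is not established; one cannot even rule out, from what you have written, an infinitely generated torsion-free piece in the kernel. So the proposal is an accurate roadmap of Johnson's proof rather than a proof.
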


\noindent
The maps
$$\tau_{g,1} : \Torelli_{g,1} \longrightarrow \HH_1(\Torelli_{g,1};\Z) / (\text{$2$-torsion}) \cong \wedge^3 H$$
and
$$\tau_{g} : \Torelli_{g} \longrightarrow \HH_1(\Torelli_{g};\Z) / (\text{$2$-torsion}) \cong (\wedge^3 H) / H$$
are known as the {\em Johnson homomorphisms} and have many remarkable properties.  For a survey,
see \cite{JohnsonSurvey}.

\section{The abelianization of $\boldsymbol{\Mod_{g,n}(L)}$}
\label{section:mainh1level}

Partly to establish notation, we begin by recalling the statement of the 5-term exact sequence in group homology.

\begin{theorem}[{see, e.g., \cite[Corollary VII.6.4]{BrownCohomology}}]
\label{theorem:fiveterm}
Let
$$1 \longrightarrow K \longrightarrow G \longrightarrow Q \longrightarrow 1$$
be a short exact sequence of groups and let $R$ be a ring.  There is then
an exact sequence
$$\HH_2(G;R) \longrightarrow \HH_2(Q;R) \longrightarrow \HH_1(K;R)_{Q} \longrightarrow \HH_1(G;R) \longrightarrow \HH_1(Q;R) \longrightarrow 0,$$
where $\HH_1(K;R)_{Q}$ is the {\em ring of co-invariants} of $\HH_1(K;R)$ under the natural action of $Q$, that is, the quotient of 
$\HH_1(K;R)$ by the ideal generated by $\{\text{$q(k)-k$ $|$ $q \in Q$ and $k \in K$}\}$.
\end{theorem}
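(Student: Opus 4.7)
The plan is to derive this five-term exact sequence from the low-degree terms of the Lyndon--Hochschild--Serre spectral sequence. First, I would construct the LHS spectral sequence associated to the extension $1 \to K \to G \to Q \to 1$,
\[ E^2_{p,q} = \HH_p(Q; \HH_q(K;R)) \Longrightarrow \HH_{p+q}(G;R), \]
via, for instance, the filtration of the bar resolution of $G$ by the subgroup $K$, or equivalently the Cartan--Eilenberg double complex obtained from projective resolutions of $R$ over $\Z Q$ and $\Z K$. Topologically, this is the Serre spectral sequence of a fibration $K(K,1) \to K(G,1) \to K(Q,1)$.

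Next I would identify the relevant $E^2$-terms in total degree at most two: $E^2_{1,0} = \HH_1(Q;R)$, $E^2_{0,1} = \HH_0(Q;\HH_1(K;R)) = \HH_1(K;R)_Q$, and $E^2_{2,0} = \HH_2(Q;R)$, together with the single nontrivial differential among them, the transgression $d_2 \colon \HH_2(Q;R) \to \HH_1(K;R)_Q$. Because the spectral sequence is first-quadrant, every $d_r$ with $r \geq 3$ that touches a position of total degree $\leq 2$ is automatically zero, so the $E^\infty$-page in that region is governed entirely by $d_2$: one obtains $E^\infty_{1,0} = E^2_{1,0}$, $E^\infty_{0,1} = E^2_{0,1}/\operatorname{im}(d_2)$, and $E^\infty_{2,0} = \ker(d_2)$.

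The five-term sequence then follows from convergence. The filtration of $\HH_1(G;R)$ produces a short exact sequence $0 \to E^\infty_{0,1} \to \HH_1(G;R) \to E^\infty_{1,0} \to 0$, which combined with the identifications above yields exactness at $\HH_1(K;R)_Q$ and $\HH_1(G;R)$ and surjectivity onto $\HH_1(Q;R)$. The edge homomorphism $\HH_2(G;R) \twoheadrightarrow E^\infty_{2,0} = \ker(d_2)$ then supplies exactness at $\HH_2(Q;R)$, completing the sequence. The main technical point requiring care is verifying that the edge maps and the transgression coincide with the natural maps induced by $K \hookrightarrow G$ and $G \twoheadrightarrow Q$ on homology --- standard functoriality checks, but slightly tedious to pin down explicitly. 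One should also note that the description of $\HH_1(K;R)_Q$ in the statement as a quotient by the ``ideal'' generated by the elements $q(k)-k$ should be read as the quotient by the $R$-submodule they span, which is precisely $\HH_0(Q;\HH_1(K;R))$, matching the $E^2$-term appearing in the argument.
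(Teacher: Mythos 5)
Your proposal is correct and follows exactly the standard route: the paper does not prove this statement but cites \cite[Corollary VII.6.4]{BrownCohomology}, where the five-term sequence is obtained from the low-degree terms of the Lyndon--Hochschild--Serre spectral sequence just as you describe. Your identification of the $E^2$-terms, the role of the transgression $d_2$, and the reading of the ``ideal'' in $\HH_1(K;R)_Q$ as the $R$-submodule generated by the elements $q(k)-k$ are all accurate.
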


\noindent
We will need a special case of a theorem of Broaddus-Farb-Putman that gives ``relative'' versions
of the Johnson homomorphisms on certain ``homologically defined'' subgroups of $\Mod_{g,b}$.
In our situation, the result can be stated as follows.

\begin{theorem}[{Broaddus-Farb-Putman, \cite[Example 5.3 and Theorem 5.8]{BroaddusFarbPutmanDistortion}}]
\label{theorem:reljohnson}
Fix $L \geq 2$, $g \geq 3$, and $n \in \{0,1\}$.  Set $H = \HH_1(\Sigma_{g,n};\Z)$ and
$H(L) = \HH_1(\Sigma_{g,n};\Z / L \Z)$, and define $X$ and $X(L)$ to equal $H$ and $H(L)$ if $n=0$ and
to equal $0$ if $n=1$.  Hence $(\wedge^3 H) / X$ is the target for the Johnson homomorphism
on $\Torelli_{g,n}$.  Then
there exist homomorphisms $\tau_{g,n}(L) : \Mod_{g,1}(L) \rightarrow (\wedge^3 H(L)) / X(L)$ 
that fit into the commutative diagram
$$\begin{CD}
\Torelli_{g,n} @>{\tau_{g,n}}>> (\wedge^3 H)/X\\
@VVV                      @VVV \\
\Mod_{g,n}(L) @>{\tau_{g,n}(L)}>> (\wedge^3 H(L))/X(L)
\end{CD}$$
Here the right hand vertical arrow is reduction mod $L$.
\end{theorem}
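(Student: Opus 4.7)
The plan is to extend the classical Johnson homomorphism $\tau_{g,n}$ to a crossed homomorphism on the full mapping class group, reduce coefficients modulo $L$, and observe that the crossed-homomorphism correction term vanishes upon restriction to $\Mod_{g,n}(L)$.

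The key input I would use is Morita's extension theorem: for $n \in \{0,1\}$, there is a crossed homomorphism
$$\tilde{\tau}_{g,n} : \Mod_{g,n} \longrightarrow (\wedge^3 H)/X$$
with respect to the natural action of $\Mod_{g,n}$ on $(\wedge^3 H)/X$ through its symplectic representation, whose restriction to $\Torelli_{g,n}$ recovers $\tau_{g,n}$. For $n=1$ this is built from the action of $\Mod_{g,1}$ on the second nilpotent quotient of the free group $\pi_1(\Sigma_{g,1})$ together with an arbitrary set-theoretic section. For $n=0$ one either appeals to Morita's direct closed-surface construction or passes through the forgetful map $\Mod_{g,1} \to \Mod_g$ with kernel $\pi_1(UT\Sigma_g)$, after checking that the image of this kernel under $\tilde{\tau}_{g,1}$ lies in the copy of $H \subset \wedge^3 H$ that is being quotiented out.

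The rest of the argument is then formal. Reducing coefficients along $H \to H(L) = H \otimes \Z/L\Z$ produces a crossed homomorphism
$$\bar{\tau}_{g,n} : \Mod_{g,n} \longrightarrow (\wedge^3 H(L))/X(L),$$
and by the very definition of $\Mod_{g,n}(L)$ as the kernel of $\Mod_{g,n} \to \Aut(H(L))$, this subgroup acts trivially on the target. For $f,g \in \Mod_{g,n}(L)$ the crossed-homomorphism identity
$$\bar{\tau}_{g,n}(fg) = \bar{\tau}_{g,n}(f) + f \cdot \bar{\tau}_{g,n}(g)$$
therefore collapses to the ordinary homomorphism identity $\bar{\tau}_{g,n}(fg) = \bar{\tau}_{g,n}(f) + \bar{\tau}_{g,n}(g)$, so I would set $\tau_{g,n}(L) := \bar{\tau}_{g,n}|_{\Mod_{g,n}(L)}$. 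The commutative square in the statement then follows at once from $\tilde{\tau}_{g,n}|_{\Torelli_{g,n}} = \tau_{g,n}$, since the vertical maps are inclusion on the left and reduction mod $L$ on the right.

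The main obstacle is Morita's crossed-homomorphism extension itself, and in particular the factor-of-$2$ issue: classically the integral extension is only constructed for $2\tau_{g,n}$, so $\tilde{\tau}_{g,n}$ naturally takes values in $\wedge^3 H \otimes \Z[\tfrac{1}{2}]$. This is precisely the reason for the standing hypothesis that $L$ is odd, because then $2$ is invertible in $\Z/L\Z$ and $\bar{\tau}_{g,n}$ is unambiguously defined after reduction; for even $L$ the construction would need to be reworked.
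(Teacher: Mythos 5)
This theorem is not actually proved in the paper: it is imported verbatim from Broaddus--Farb--Putman with a citation, so you are supplying an argument where the paper supplies none. Your route --- extend $\tau_{g,n}$ to a Morita crossed homomorphism on all of $\Mod_{g,n}$, reduce coefficients mod $L$, and observe that the twisted term $f \cdot \bar{\tau}_{g,n}(g)$ becomes untwisted because $\Mod_{g,n}(L)$ by definition acts trivially on $H(L)$ --- is sound for odd $L$ and is genuinely different from the cited construction. A pleasant by-product is that the resulting homomorphism on $\Mod_{g,n}(L)$ is independent of the choice of extension, since any two extensions differ by a principal crossed homomorphism $f \mapsto f\cdot c - c$, which dies on $\Mod_{g,n}(L)$ after reduction mod $L$; your $n=0$ descent is also fine, since $\pi_1(UT\Sigma_g) \subset \Torelli_{g,1}$ acts trivially on $\wedge^3 H$ and Johnson's computation places its image inside the distinguished copy of $H$, which is exactly what is needed to push a \emph{crossed} homomorphism down the Birman exact sequence.

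The one genuine defect is the parity issue you yourself flag, but you misattribute it: there is no standing hypothesis that $L$ is odd in this statement --- it fixes only $L \geq 2$ --- so a construction that requires inverting $2$ proves strictly less than what is asserted. Broaddus--Farb--Putman avoid Morita's factor of $2$ entirely by building $\tau_{g,n}(L)$ directly, in the style of Johnson's original definition, from the action of $\Mod_{g,n}(L)$ on a mod-$L$ nilpotent truncation of $\pi_1(\Sigma_{g,n})$; that works for every $L \geq 2$. Since the proof of Theorem \ref{theorem:mainh1} only ever invokes this result for odd $L$, your version would suffice for everything this paper does with it, but as a proof of the quoted statement it has a real gap at even $L$.
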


\noindent
We preface the proof of Theorem \ref{theorem:mainh1} with two lemmas.
Our first lemma was originally proven by McCarthy \cite[proof of Theorem 1.1]{McCarthyLevel}.  We
give an alternate proof.  If $G$ is a
group and $g \in G$, then denote by $[g]$ the corresponding element of $\HH_1(G;\Z)$.

\Figure{figure:crossedlantern}{CrossedLantern}{The crossed lantern relation
$(T_{y_1}T_{y_2}^{-1})(T_{x_1}T_{x_2}^{-1})=(T_{z_1}T_{z_2}^{-1})$}

\begin{lemma}
\label{lemma:kill2torsion}
For $n \in \{0,1\}$, let $\{x_1,x_2\}$ be a bounding pair on $\Sigma_{g,n}$.
Then $L [T_{x_1} T_{x_2}^{-1}] = 0$ in $\HH_1(\Mod_{g,n}(L);\Z)$.
\end{lemma}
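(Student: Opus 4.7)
My plan is to show that $(T_{x_1} T_{x_2}^{-1})^L$ lies in the commutator subgroup $[\Mod_{g,n}(L),\Mod_{g,n}(L)]$; since $L[T_{x_1} T_{x_2}^{-1}]$ is the class of this element in the abelianization, this will prove the lemma. The starting observation is that $x_1$ and $x_2$ are disjoint, so $T_{x_1}$ and $T_{x_2}$ commute. Moreover, each $T_{x_i}^L$ already lies in $\Mod_{g,n}(L)$: the Dehn twist $T_{x_i}$ acts on $\HH_1(\Sigma_{g,n};\Z)$ as the transvection $v \mapsto v + \langle v, [x_i]\rangle [x_i]$, whose $L$-th power is trivial modulo $L$. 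Combining these facts, $(T_{x_1} T_{x_2}^{-1})^L = T_{x_1}^L T_{x_2}^{-L}$, so in $\HH_1(\Mod_{g,n}(L);\Z)$,
\[L[T_{x_1} T_{x_2}^{-1}] = [T_{x_1}^L] - [T_{x_2}^L],\]
and the lemma reduces to proving $[T_{x_1}^L] = [T_{x_2}^L]$.

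To establish this, I would produce an element $\phi \in \Mod_{g,n}(L)$ with $\phi(x_1) = x_2$. Given such $\phi$, conjugation gives $\phi T_{x_1}^L \phi^{-1} = T_{x_2}^L$, hence
\[T_{x_1}^L T_{x_2}^{-L} = T_{x_1}^L \cdot \phi T_{x_1}^{-L} \phi^{-1} = [T_{x_1}^L, \phi] \in [\Mod_{g,n}(L), \Mod_{g,n}(L)],\]
so its class vanishes in the abelianization.

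The main obstacle is producing $\phi$. Since $\{x_1, x_2\}$ is a bounding pair, $[x_1] = [x_2]$ in $\HH_1(\Sigma_{g,n};\Z)$, so the existence of $\phi$ is at least consistent with the stronger requirement that $\phi \in \Torelli_{g,n} \subset \Mod_{g,n}(L)$. I expect the crossed lantern relation depicted in Figure \ref{figure:crossedlantern} to be the key tool here: the identity $(T_{y_1} T_{y_2}^{-1})(T_{x_1} T_{x_2}^{-1}) = T_{z_1} T_{z_2}^{-1}$ among bounding pair maps in a standard local subsurface gives explicit identities that allow one to piece together an element of $\Torelli_{g,n}$ moving $x_1$ to $x_2$. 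Alternatively, the existence of such a $\phi$ can be extracted from a change-of-coordinates principle asserting that $\Torelli_{g,n}$ acts transitively on nonseparating simple closed curves of a given integral homology class.
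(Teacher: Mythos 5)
Your argument is correct, but it is genuinely different from the one in the paper. Your reduction is clean: since $x_1$ and $x_2$ are disjoint the twists commute, each $T_{x_i}^L$ acts on $\HH_1(\Sigma_{g,n};\Z)$ by the $L$-th power of a transvection and hence lies in $\Mod_{g,n}(L)$, and therefore $L[T_{x_1}T_{x_2}^{-1}] = [T_{x_1}^L] - [T_{x_2}^L]$. The remaining input is an element $\phi \in \Mod_{g,n}(L)$ with $\phi(x_1)=x_2$, and here your second suggestion is the right one: since the two curves of a bounding pair are nonseparating and homologous, the change-of-coordinates principle for the Torelli group (Johnson; see also \cite{PutmanCutPaste}) supplies $\phi \in \Torelli_{g,n} \subset \Mod_{g,n}(L)$, which makes $T_{x_1}^L T_{x_2}^{-L}$ a commutator $[T_{x_1}^L,\phi]$ in $\Mod_{g,n}(L)$. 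Your first suggestion --- assembling $\phi$ out of crossed lantern relations --- is not really viable and should be dropped; the crossed lantern is a relation among bounding pair maps, not a device for producing a homeomorphism carrying $x_1$ to $x_2$. The paper's proof uses the crossed lantern relation quite differently: it iterates the relation $L$ times to show that conjugating the bounding pair map $T_{y_1}T_{y_2}^{-1}$ by $T_{x_2}^L \in \Mod_{g,n}(L)$ changes its class in $\HH_1(\Mod_{g,n}(L);\Z)$ by exactly $L[T_{x_1}T_{x_2}^{-1}]$, which must therefore vanish. The trade-off is that the paper's argument is entirely explicit and local to a two-holed torus, needing only the relation from \cite{PutmanInfinite}, whereas yours is shorter but leans on the (true, standard, but nontrivial) transitivity of the Torelli group on nonseparating curves in a fixed homology class; in the form you have written it, that transitivity statement must be properly cited or proved for the argument to be complete.
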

\begin{proof}
Embed $\{x_1,x_2\}$ in a 2-holed torus as in Figure \ref{figure:crossedlantern}.  We will make use
of the {\em crossed lantern relation} from \cite{PutmanInfinite}.  Letting
$\{y_1,y_2\}$ and $\{z_1,z_2\}$ be the other bounding pair maps depicted in Figure
\ref{figure:crossedlantern}, this relation says that
$$(T_{y_1}T_{y_2}^{-1})(T_{x_1}T_{x_2}^{-1})=(T_{z_1}T_{z_2}^{-1}).$$
Observe that for $i=1,2$ we have $z_i = T_{x_2}(y_i)$.  The key observation is that for all $n \geq 0$ we have another
crossed lantern relation
$$(T_{T_{x_2}^n(y_1)} T_{T_{x_2}^n(y_2)}^{-1}) (T_{x_1}T_{x_2}^{-1}) = (T_{T_{x_2}^{n+1}(y_1)} T_{T_{x_2}^{n+1}(y_2)}^{-1}).$$
Since $T_{x_2}^L \in \Mod_{g,n}(L)$, we conclude that in $\HH_1(\Mod_{g,n}(L);\Z)$ we have
\begin{align*}
[T_{y_1} T_{y_2}^{-1}] &= [T_{x_2}^L] + [T_{y_1} T_{y_2}^{-1}] - [T_{x_2}^{L}] = [T_{x_2}^L (T_{y_1} T_{y_2}^{-1}) T_{x_2}^{-L}] =[(T_{T_{x_2}^L(y_1)} T_{T_{x_2}^L(y_2)}^{-1})]\\
                       &= [T_{x_1}T_{x_2}^{-1}] + [(T_{T_{x_2}^{L-1}(y_1)} T_{T_{x_2}^{L-1}(y_2)}^{-1})]\\
                       &= 2[T_{x_1}T_{x_2}^{-1}] + [(T_{T_{x_2}^{L-2}(y_1)} T_{T_{x_2}^{L-2}(y_2)}^{-1})]\\
                       &\hspace{5.5pt}\vdots\\
                       &= L[T_{x_1}T_{x_2}^{-1}] + [T_{y_1} T_{y_2}^{-1}],
\end{align*}
so $L[T_{x_1}T_{x_2}^{-1}] = 0$, as desired.
\end{proof}

\noindent
For the statement of the following lemma, recall that if a group $G$ acts on a ring $R$, then
the coinvariants of that action are denoted $R_G$.

\begin{lemma}
\label{lemma:splinv}
For $L \geq 2$, define $H = \HH_1(\Sigma_{g};\Z)$ and $H(L) = \HH_1(\Sigma_{g};\Z / L\Z)$.  Then
$$(\wedge^3 H)_{\Sp_{2g}(\Z,L)} \cong \wedge^3 H(L)$$
and
$$((\wedge^3 H)/H)_{\Sp_{2g}(\Z,L)} \cong (\wedge^3 H(L))/H(L).$$
\end{lemma}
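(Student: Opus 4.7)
The plan is to prove both isomorphisms simultaneously by exploiting the following observation: if $M$ is a $\Sp_{2g}(\Z,L)$-module on which the induced action on $M/LM$ is trivial, then the reduction-mod-$L$ map $M \to M/LM$ factors through the coinvariants to give a canonical surjection $M_{\Sp_{2g}(\Z,L)} \twoheadrightarrow M/LM$, and this is an isomorphism precisely when $LM \subseteq N_M$, where $N_M$ is the submodule generated by $\{gw - w : g \in \Sp_{2g}(\Z,L),\ w \in M\}$.

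I would apply this to the two modules $M = \wedge^3 H$ and $M = (\wedge^3 H)/H$; the hypothesis holds for both because $\Sp_{2g}(\Z,L)$ acts trivially on $H(L)$ by definition. Both modules are finitely generated free abelian groups (the ``primitive'' part $(\wedge^3 H)/H$ is torsion-free), and the short exact sequence $0 \to H \to \wedge^3 H \to (\wedge^3 H)/H \to 0$ remains exact after tensoring with $\Z/L\Z$. Hence $M/LM$ agrees with $\wedge^3 H(L)$ or $(\wedge^3 H(L))/H(L)$, matching the right-hand sides of the lemma.

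The heart of the argument is to show $L \cdot \wedge^3 H \subseteq N_{\wedge^3 H}$; the inclusion for $(\wedge^3 H)/H$ then follows by pushing forward along the equivariant quotient map. For this, I would use $L$-th powers of symplectic transvections: given $v \in H$, the map $T_v^L$ lies in $\Sp_{2g}(\Z,L)$ and acts on $H$ by $w \mapsto w + L\langle w,v\rangle v$, so expanding in $\wedge^3 H$ and noting that all higher-order terms in $L$ contain $v \wedge v = 0$ yields the identity
$$T_v^L(w_1 \wedge w_2 \wedge w_3) - w_1 \wedge w_2 \wedge w_3 = L\bigl(\langle w_1,v\rangle\, v \wedge w_2 \wedge w_3 + \langle w_2,v\rangle\, w_1 \wedge v \wedge w_3 + \langle w_3,v\rangle\, w_1 \wedge w_2 \wedge v\bigr).$$
Fixing a symplectic basis $a_1,\ldots,a_g,b_1,\ldots,b_g$ of $H$, for each basis triple $e_{i_1} \wedge e_{i_2} \wedge e_{i_3}$ I would choose a basis vector $v$ that is symplectically orthogonal to two of the $e_{i_j}$ but paired nontrivially with the third, and apply the identity above to a suitably chosen auxiliary triple $w_1 \wedge w_2 \wedge w_3$ to realize $\pm L \cdot (e_{i_1} \wedge e_{i_2} \wedge e_{i_3})$ as an element of $N_{\wedge^3 H}$. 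The case split is between triples with three distinct symplectic indices (needing $g \geq 3$) and triples containing a dual pair $\{a_k,b_k\}$; each is handled by a one-line verification.

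The main obstacle is this case analysis: while every individual case reduces to a direct application of the transvection identity, one must check that every basis triple is reachable, which is precisely where the hypothesis $g \geq 3$ enters.
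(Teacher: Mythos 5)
Your proposal is correct and follows essentially the same route as the paper: the paper likewise reduces to showing that $L$ times each basis generator $x \wedge y \wedge z$ dies in the coinvariants, and realizes this by choosing $\phi \in \Sp_{2g}(\Z,L)$ with $\phi(b_1) = b_1 + L a_1$ fixing the other two vectors, which is exactly your $L$-th power of a transvection applied to an auxiliary triple. The only cosmetic difference is that the paper organizes the cases by noting that in any distinct triple from a symplectic basis some element pairs trivially with the other two, rather than splitting on whether the triple contains a dual pair.
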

\begin{proof}
Letting $S=\{a_1,b_1,\ldots,a_g,b_g\}$ be a symplectic basis for $H$, the groups $\wedge^3 H$ and
$(\wedge^3 H) / H$
are generated by $T:=\{\text{$x \wedge y \wedge z$ $|$ $x,y,z \in S$ distinct}\}$.  Consider $x \wedge y \wedge z \in T$.
It is enough to show that in the indicated rings of coinvariants we have $L(x \wedge y \wedge z)=0$.
Now, one of $x$, $y$, and $z$ must have algebraic intersection number $0$ with the other two terms.  Assume
that $x = a_1$ and $y,z \in \{a_2,b_2,\ldots,a_g,b_g\}$ (the other cases are similar).  There is then some
$\phi \in \Sp_{2g}(\Z,L)$ so that $\phi(b_1) = b_1 + L a_1 = b_1 + L x$ and so that $\phi(y)=y$ and $\phi(z)=z$.  We
conclude that in the indicated ring of coinvariants we have $b_1 \wedge y \wedge z = (b_1 + L x) \wedge y \wedge z$, so
$L(x \wedge y \wedge z) = 0$, as desired.
\end{proof}

\begin{remark}
Lemma \ref{lemma:splinv} would {\em not} be true if $\wedge^3 H$ were replaced by $\wedge^2 H$, as
$\wedge^2 H$ contains a copy of the trivial representation of $\Sp_{2g}(\Z)$.
\end{remark}

\begin{proof}[{Proof of Theorem \ref{theorem:mainh1}}]
We will do the proof for $\Mod_{g,1}(L)$; the other case is similar.  
Let $H$ and $H(L)$ be as in Theorem \ref{theorem:reljohnson}.  Associated to the short exact sequence
$$1 \longrightarrow \Torelli_{g,1} \longrightarrow \Mod_{g,1} \longrightarrow \Sp_{2g}(\Z,L) \longrightarrow 1$$
is the 5-term exact sequence in homology given by Theorem \ref{theorem:fiveterm}.  Theorem \ref{theorem:h1torelli} says that
$$\HH_1(\Torelli_{g,1};\Z) \cong \wedge^3 H \oplus (\text{$2$-torsion})$$
and Theorem \ref{theorem:h1spl} says that $\HH_1(\Sp_{2g}(\Z,L);\Z) \cong \SpLie_{2g}(\Z / L \Z)$.  The
last 3 terms of our 5-term exact sequence are thus
$$(\wedge^3 H \oplus (\text{$2$-torsion}))_{\Sp_{2g}(\Z,L)} \stackrel{i}{\longrightarrow} \HH_1(\Mod_{g,1}(L);\Z)
  \longrightarrow \SpLie_{2g}(\Z / L \Z) \longrightarrow 0.$$
Since $L$ is odd, Lemma \ref{lemma:kill2torsion} together with Theorem \ref{theorem:torelligenerators} say
that if 
$$x \in (\wedge^3 H \oplus (\text{$2$-torsion}))_{\Sp_{2g}(\Z,L)}$$
is $2$-torsion then $i(x)=0$.  Moreover, Lemma \ref{lemma:splinv} says that
$$(\wedge^3 H)_{\Sp_{2g}(\Z,L)} \cong \wedge^3 H(L).$$
We thus obtain an exact sequence 
$$\wedge^3 H(L) \stackrel{j}{\longrightarrow} \HH_1(\Mod_{g,1}(L);\Z)
  \longrightarrow \SpLie_{2g}(\Z / L \Z) \longrightarrow 0.$$
Theorem \ref{theorem:reljohnson} then implies that $j$ is an injection, and the proof is complete.
\end{proof}

\noindent
Department of Mathematics; MIT, 2-306 \\
77 Massachusetts Avenue \\
Cambridge, MA 02139-4307 \\
E-mail: {\tt andyp@math.mit.edu}
\medskip

\end{document}